\let\oldequation\equation
\let\oldendequation\endequation
\renewenvironment{equation}
  {\linenomathNonumbers\oldequation}
  {\oldendequation\endlinenomath}
\journal{~}
\definecolor{orange}{rgb}{0.7,0.3,0}
\definecolor{blue}{rgb}{.2,.6,.75}
\definecolor{green}{rgb}{.4,.7,.4}
\definecolor{purple}{RGB}{127,0,255}
\newtheorem{theorem}{Theorem}[section]
\newtheorem{corollary}[theorem]{Corollary}
\newtheorem{definition}{Definition}
\newtheorem{lemma}[theorem]{Lemma}
\newcommand{\abs}[1]{\lvert #1 \rvert}
\begin{document}

\begin{frontmatter}

\title{Zeros of derivatives of $L$-functions in the Selberg class on $\Re(s)<1/2$}

\author[A]{Sneha Chaubey}
\ead{sneha@iiitd.ac.in}
\address[A]{Department of Mathematics,\\ Indraprastha Institute of Information Technology,\\ New Delhi - 110020, India \vspace{0.5 cm}}
\author[B]{Suraj Singh Khurana}
\ead{suraj.singh.khurana@gmail.com}
\author[C]{Ade Irma Suriajaya}
\address[B]{Department of Mathematics and Statistics,\\ Indian Institute of Technology Kanpur,\\ Uttar Pradesh – 208016, India \vspace{0.5 cm}}
\address[C]{Faculty of Mathematics, \\ Kyushu University, 744 Motooka, Nishi-ku,\\ Fukuoka 819-0395, Japan}
\ead{adeirmasuriajaya@math.kyushu-u.ac.jp}

\begin{abstract}
In this article, we show that the Riemann hypothesis for an $L$-function $F$ belonging to the Selberg class implies that all the derivatives of $F$ can have at most finitely many zeros on the left of the critical line with imaginary part greater than a certain constant. This was shown for the Riemann zeta function by Levinson and Montgomery in 1974.
\end{abstract}

\begin{keyword}
$L$-functions\sep Selberg class\sep zeros \sep derivative.
\MSC[2020] 11M41. 
\end{keyword}

\end{frontmatter}


\section{Introduction and main results}
It is well-known that the Riemann zeta function $\zeta(s)$ is one of the most famous and mysterious functions. It has its origin from the well-known memoir of Riemann in the 19\textsuperscript{th} century.
Zeros of this function capture the distribution of prime numbers and thus its distribution is of great interest in number theory.
Zeros of $\zeta(s)$ are classified into two groups: trivial and non-trivial zeros. As their names suggest, trivial zeros of $\zeta(s)$ are precisely known whereas the exact location of non-trivial zeros remains one of the most important math problems.
It is conjectured by Riemann in the same paper \cite{rie} that all non-trivial zeros of $\zeta(s)$ lie on the line $\Re(s)=1/2$, known as the Riemann hypothesis.
Using basic properties of $\zeta(s)$, it can be shown that non-trivial zeros lie in the critical strip $0\le\Re(s)\le1$, and together with the well-known prime number theorem, one can remove the equalities.
These  properties of $\zeta(s)$ are expected to be the essence of a meromorphic function satisfying the Riemann hypothesis.
To investigate this in a general setup, 
Selberg \cite{MR1220477} defined a class of $L$-functions with properties similar to $\zeta(s)$, and which therefore is known as the Selberg class.
\begin{definition}
The Selberg class $\mathcal{S}$ is defined as the set of all Dirichlet series
\begin{equation} \label{DSS}
F(s)=\sum_{n=1}^\infty \frac{a_n}{n^s}
\end{equation}
that satisfies the following five axioms:
\begin{enumerate}
    \item The Dirichlet series converges absolutely for $\Re(s)>1$.
    \item There exists an integer $m_f \geq 0$ such that the function $(s-1)^m F(s)$ is entire of finite order.
    \item $F$ satisfies the following functional equation
    \begin{equation}
        \Phi(s)=\omega_{F}\overline{\Phi}(1-s),
    \end{equation}
    where
    \begin{equation} \label{FEQ}
    \Phi(s)=Q^s\prod\limits_{j=1}^{r}\Gamma(\lambda_js+\mu_j)F(s)=\gamma(s)F(s),
    \end{equation}
    and $r\geq 0$, $Q>0$, $\lambda_j >0$, $\Re(\mu_j)\geq 0$, $\abs{\omega_F}=1$ are parameters depending on $F$.
    \item The coefficients $a(n)\ll n^\epsilon $ for every $\epsilon \geq 0$.
    \item For $\Re(s)>1$, we have
    \begin{equation} \label{EPF}
        \log F(s)= \sum\limits_{n=1}^{\infty}\dfrac{b(n)}{n^s},
    \end{equation}
    where $b(n)=0$ unless $n=p^m$ with $m \geq 1$, and $b(n) \ll n^\theta$ for some $\theta < 1/2$.
\end{enumerate}
\end{definition}
Some well-known examples of $L$-functions in the Selberg class are Dirichlet $L$-functions associated to primitive characters, Dedekind zeta functions of number fields $K/\mathbb{Q}$, Hecke $L$-functions with primitive characters, Artin $L$-functions associated to irreducible Galois representations of number fields $K/\mathbb{Q}$, and $L$-functions associated to holomorpic newforms.
For a survey of results and open problems in the theory of Selberg class one may refer to the well-known articles of Perelli and Kaczorowski \cite{MR2175035,MR2128842,MR2277660}. Naturally, one can ask if the properties satisfied by the Riemann zeta function are also true for $L$-functions belonging to the Selberg class $\mathcal{S}$. Most often, such results are difficult to establish, for example, it was only recently proven that the prime number theorem for $L$-functions in $\mathcal{S}$ is equivalent to the non-vanishing of the $L$-function on the line $\Re{s}=1$ \cite{MR1981179}. Although this non-vanishing criterion for the Riemann zeta function has been independently proven by Hadamard and de la Vall\'ee Poussin in 1896, the general case for $L$-functions in $\mathcal{S}$ is still open. Another famous result, known as Speiser's criterion, relates non-trivial zeros of the Riemann zeta function to those of its first derivative $\zeta'(s)$. More precisely, it states that the Riemann hypothesis is equivalent to the absence of non-real (we refer to them as non-trivial) zeros for $\zeta'(s)$ for $\Re(s)<1/2$. Consequently, Speiser's criterion \cite{MR1512953} for the Riemann hypothesis has led to a growing interest in the study of zeros of derivatives of the Riemann zeta function, and in particular, in establishing analogous criteria for other $L$-functions in general \cite{MR2026098,MR3386244,MR3724167,MR4038389}.

In this note, we establish the following results for $L$-functions in $\mathcal{S}$. 
\begin{theorem} \label{SPR1}
For a given $F\in\mathcal{S}$, let $T_F$ be a real number given by
\begin{equation} \label{T_F}
T_F := \max\limits_{j} \left\vert \dfrac{\Im{\mu_j}}{\lambda_j}\right\vert,
\end{equation}
where $\mu_j$ and $\lambda_j$ are as in \eqref{FEQ}.
There exists a constant $\alpha_{F,k}>0$ such that $F^{(k)}(s)$ has no zeros in the region $\Re(s)<-\alpha_{F,k}$ and $|\Im(s)|>T_F$. Further, in the region $|\Im(s)|\leq T_F$, $F^{(k)}(s)$ has a zero $z^{(k)}_{n,j}$ in each rectangle $\mathcal{R}_{n,j}$ with vertices $\dfrac{\pm c_k-2\mu_j-2n}{2\lambda_j} \pm it_{F,k}$, where $1 \leq j \leq r$, $n\ge N_{F,k}$ runs over all positive integers such that all these rectangles lie in the region $\Re(s)<-\alpha_{F,k}$, 
that is, for each $k$, there is a positive integer $N_{F,k}$ such that
$$
\max_{1 \leq j \leq r} \dfrac{\pm c_k-2\mu_j-2N_{F,k}}{2\lambda_j} < -\alpha_{F,k},
$$
and $c_k,t_{F,k}>0$ are small positive constants depending on $F$ and $k$. In particular, for each $j$ we have
\begin{equation*}
    z^{(k)}_{n,j} = -\dfrac{n}{\lambda_j}+O_k(1).
\end{equation*}
\end{theorem}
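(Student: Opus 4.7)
My plan is to adapt the Levinson--Montgomery argument for $\zeta^{(k)}$ to the Selberg class. Rewriting the functional equation as
$$F(s) = \mathcal{X}_F(s)\,\overline{F}(1-s),\qquad \mathcal{X}_F(s) := \omega_F\,\frac{\overline{\gamma}(1-s)}{\gamma(s)},$$
and applying the Gamma reflection formula to each $\Gamma(\lambda_j s+\mu_j)^{-1}$ gives the clean factorization $\mathcal{X}_F = A\cdot S$, where
$$A(s) = \frac{\omega_F Q^{1-2s}}{\pi^r}\prod_{j=1}^{r}\Gamma\bigl(\lambda_j(1-s)+\bar\mu_j\bigr)\Gamma\bigl(1-\lambda_j s-\mu_j\bigr),\quad S(s) = \prod_{j=1}^{r}\sin\bigl(\pi(\lambda_j s+\mu_j)\bigr).$$
For $\Re(s)<0$ both gammas in $A$ have arguments of large positive real part, so Stirling yields $A^{(m)}(s) = A(s)(-d_F\log|s|)^m(1+o(1))$ with $d_F := 2\sum_j\lambda_j$, whereas the zeros of $S$ are precisely the trivial zeros $s_0 = -(n+\mu_j)/\lambda_j$, all of which satisfy $|\Im(s_0)| \leq T_F$.

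Differentiating $F = \mathcal{X}_F\cdot\overline{F}(1-\cdot)$ via Leibniz and using $|\overline{F}^{(m)}(1-s)| \ll_m 2^{\Re s}$ for $m \geq 1$ (and $\overline{F}(1-s) = 1 + O(2^{\Re s})$ for $m=0$, both coming from estimating the Dirichlet series of $\overline{F}^{(m)}$ by its $n=2$ term), the term with $\ell=k$ dominates and
$$F^{(k)}(s) = \mathcal{X}_F^{(k)}(s)\bigl(1+o(1)\bigr)\qquad\text{as } \Re(s)\to -\infty,$$
uniformly in any fixed horizontal strip. This reduces the problem to the zero structure of $\mathcal{X}_F^{(k)} = \sum_{\ell=0}^{k}\binom{k}{\ell}A^{(k-\ell)}S^{(\ell)}$. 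Retaining near a trivial zero $s_0$ only the two highest-order terms in $L_0 := A'(s_0)/A(s_0) \sim -d_F\log|s_0|$, one expects $\mathcal{X}_F^{(k)}(s) \approx A(s)\bigl[L_0^{k}S(s) + k L_0^{k-1}S'(s)\bigr]$, and the local expansion $S(s) = c_0(s-s_0) + O((s-s_0)^2)$ with $S'(s_0) = c_0 \neq 0$ pins down a unique root $s^* = s_0 - k/L_0 + O(1/L_0^{2})$ at distance $\asymp k/(d_F\log|s_0|)$ from $s_0$. For $n$ large, $s^*$ lies well inside the rectangle $\mathcal{R}_{n,j}$ defined in the statement, matching $z^{(k)}_{n,j} = -n/\lambda_j + O_k(1)$.

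The proof then concludes by two Rouché comparisons. On $\partial\mathcal{R}_{n,j}$ the factor $|\sin(\pi(\lambda_j s+\mu_j))|$ is bounded below (horizontally by $\sin(\pi c_k/2)$ and vertically by $\sinh(\pi\lambda_j t_{F,k})$), so the dominant term in $\mathcal{X}_F^{(k)}$ exceeds both the other contributions in its own expansion and the overall error $|F^{(k)} - \mathcal{X}_F^{(k)}|$, producing exactly one zero of $F^{(k)}$ inside. For $\Re(s) < -\alpha_{F,k}$ and $|\Im(s)| > T_F$, the quantity $|\Im(\lambda_j s+\mu_j)|$ is strictly positive for every $j$, so $|\sin(\pi(\lambda_j s+\mu_j))| \geq |\sinh(\pi\,\Im(\lambda_j s+\mu_j))|$ provides a local lower bound, the leading contribution $A(s) L(s)^{k} S(s)$ to $\mathcal{X}_F^{(k)}$ dominates, and via the asymptotic displayed above we get $F^{(k)}(s) \neq 0$. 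The main technical obstacle is keeping the Stirling-based lower bound on $|\mathcal{X}_F^{(k)}|$ uniform over $\partial\mathcal{R}_{n,j}$ while still enclosing the shifted zero $s^*$; this forces $c_k$ and $t_{F,k}$ to be simultaneously small relative to the spacing $1/\lambda_j$ between consecutive trivial zeros and large relative to $1/\log|s_0|$.
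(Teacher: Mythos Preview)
Your approach is essentially the same as the paper's. Both proofs use the functional equation together with the reflection formula to split off a product of gamma factors and a product of sines, apply Leibniz, invoke Stirling to show that the gamma part contributes the dominant $(d_F\log|s|)^k$ growth, and then finish with a Rouch\'e comparison on small rectangles around the sine zeros plus a direct domination argument for $|\Im s|>T_F$. The only cosmetic difference is that the paper writes $\overline{F}(1-s)=F_0(s)F_1(s)F_2(s)$ (so the variable has large \emph{positive} real part and the Dirichlet-series piece $F(s)$ sits inside $F_0$), whereas you work with $F(s)=\mathcal{X}_F(s)\overline{F}(1-s)$ directly on the far left and peel off the $\overline{F}(1-s)=1+O(2^{\sigma})$ factor first; after the change of variable $s\mapsto 1-s$ these are literally the same decomposition, and the paper cites Spira for the Stirling step you sketch for $A^{(m)}/A$. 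Your explicit localisation $s^*=s_0-k/L_0+O(L_0^{-2})$ is a bit sharper than what the paper states, but it is not needed for the Rouch\'e count and the paper does not pursue it. One small point the paper makes explicit and you do not: when $\overline{\mu_j}/\lambda_j=\overline{\mu_l}/\lambda_l$ for some $j\neq l$ the rectangles coincide and the zeros must be counted with multiplicity.
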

The above result gives a zero-free region for $F^{(k)}(s)$ on the left half plane and shows the approximate location of zeros close to the real line when $\Re(s)<-\alpha_{F,k}$ for some constant $\alpha_{F,k}$. This also shows that those zeros, denoted by $z^{(k)}_{n,j}$ in the theorem, can be regarded as trivial zeros of $F^{(k)}(s)$. Analogous results for derivatives of the Riemann zeta function $\zeta(s)$ have been proven by Spira \cite{MR181621,MR0263754}, and for derivatives of Dirichlet $L$-functions $L(s,\chi)$ by Y\i ld\i r\i m \cite{MR1432879}, with improvements by Akatsuka and the third author \cite{MR3724167} for the first derivative.
Our second result is as follows.

\begin{theorem} \label{Main}
Let $F\in\mathcal{S}$.
For any non-negative integer $k$, if there exists a constant $T_{F,k}>0$, depending on $F$ and $k$, such that $F^{(k)}(s)$ has only a finite number of zeros with $\Re(s) < 1/2$ and $|\Im(s)|>T_{F,k}$, then the same is true for $F^{(k+l)}(s)$ for all positive integers $l$. 
\end{theorem}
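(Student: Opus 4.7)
I argue by induction on $l$, so it suffices to handle $l=1$: assuming $F^{(k)}$ has only finitely many zeros in $\{\Re(s) < 1/2,\ |\Im(s)| > T_{F,k}\}$, I must show the same for $F^{(k+1)}$. By enlarging $T_{F,k}$ to absorb the finitely many exceptional zeros, I may assume $F^{(k)}$ is zero-free on the region $\Omega := \{\Re(s) < 1/2,\ |\Im(s)| > T^*\}$ for some $T^* \ge T_{F,k}$. Combining this with Theorem~\ref{SPR1} applied to $F^{(k+1)}$, any zero of $F^{(k+1)}$ in $\Omega$ with $|\Im(s)|$ sufficiently large must lie in the bounded strip $\mathcal{S} := \{-\alpha_{F,k+1} \le \Re(s) < 1/2\}$, so the goal reduces to bounding the number of zeros of $F^{(k+1)}$ in $\Omega \cap \mathcal{S}$ by an absolute constant.

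The key auxiliary function is $H(s) := F^{(k+1)}(s)/F^{(k)}(s)$, which is analytic in $\Omega$ and vanishes exactly at the zeros of $F^{(k+1)}$ there. For a rectangle $R_T$ with $\Im(s) \in [T^*, T]$ and $\Re(s) \in [-A,\, 1/2-\varepsilon]$ (taking $A > \alpha_{F,k+1}$ large enough to enable the asymptotics below), the argument principle yields
\[
\#\{\text{zeros of }F^{(k+1)}\text{ in }R_T\} \;=\; \frac{1}{2\pi}\,\Delta_{\partial R_T}\arg H,
\]
so the task is to show $\Delta_{\partial R_T}\arg H = O(1)$ uniformly as $T \to \infty$. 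On the left vertical side I expand via the functional equation and Leibniz,
\[
F^{(k)}(s) = \sum_{j=0}^{k}\binom{k}{j}\Delta^{(j)}(s)(-1)^{k-j}\overline{F^{(k-j)}}(1-s),
\]
with $\Delta(s) := \omega_F\overline{\gamma(1-s)}/\gamma(s)$. Since $\Re(1-s) > 1+A$ here, choosing $A$ large forces each $\overline{F^{(k-j)}}(1-s)$ to lie close to its limit at infinity, and the dominant $j=k$ term gives $H(s) \sim \Delta^{(k+1)}(s)/\Delta^{(k)}(s)$. Stirling asymptotics for the log-derivatives of $\Gamma(\lambda_j s + \mu_j)$ then show this ratio contributes only $O(1)$ to $\arg H$ over the segment. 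The two horizontal sides span a bounded $\Re(s)$-range and each contribute $O(1)$, provided $T$ is chosen outside a sparse exceptional set where $|H|$ is small.

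The principal obstacle is the right vertical side $\Re(s) = 1/2-\varepsilon$, close to the critical line, where neither the functional equation nor Dirichlet-series dominance is directly available. Here I invoke the Hadamard factorization of $F^{(k)}$, which is meromorphic of order one with a pole of order $m_F + k$ at $s=1$; taking logarithmic derivatives gives
\[
H(s) = B - \frac{m_F + k}{s-1} + \sum_\rho\left(\frac{1}{s-\rho} + \frac{1}{\rho}\right),
\]
summed over zeros $\rho$ of $F^{(k)}$. By hypothesis only finitely many $\rho$ satisfy $\Re(\rho) < 1/2$, contributing $O(1)$; for the bulk zeros with $\Re(\rho) \ge 1/2$, a careful pairing that exploits the near-symmetry of zeros under the functional equation, together with standard Riemann--von Mangoldt-type zero-counting estimates for $F^{(k)}$, forces the net argument variation along this vertical segment to be bounded uniformly in $T$. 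Combining the four sides gives $\Delta_{\partial R_T}\arg H = O(1)$, so $F^{(k+1)}$ has only $O(1)$ zeros in $\mathcal{S} \cap \Omega$ above the real axis; the symmetric argument for $\Im(s) < 0$ completes the proof.
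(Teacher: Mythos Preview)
Your overall architecture differs substantially from the paper's and contains a real gap at the decisive step. The paper does not use the argument principle at all: having written
\[
\frac{F^{(k+1)}}{F^{(k)}}(s)=\frac{m_0}{s}-\frac{k+m}{s-1}+\sum_{\rho}\Bigl(\frac{1}{s-\rho}+\frac{1}{\rho}\Bigr)+O(1)
\]
from Hadamard (exactly the formula you reach), it simply takes real parts \emph{pointwise} at $s=\sigma+it$ with $-\mathcal{A}_{F,k}<\sigma<1/2$ and $|t|$ large. Splitting the zeros $\rho$ of $F^{(k)}$ into the trivial ones $B_{1,0}$ far to the left (supplied by Theorem~\ref{SPR1}), a finite set $B_{1,1}\cup B_3$, and the zeros $B_2$ with $\Re\rho\ge 1/2$, the paper checks elementarily that the $B_2$-contribution to the real part is $\le O(1)$ (since $\sigma-\Re\rho<0$), while the $B_{1,0}$-contribution is $\le -K\log|s|+O(1)$. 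Hence $\Re\bigl(F^{(k+1)}/F^{(k)}\bigr)(s)\to-\infty$, so $F^{(k+1)}(s)\neq 0$ for $|t|$ large. No contour, no argument variation, no symmetry of zeros is needed.

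Your route instead asks for a uniform bound on $\Delta\arg H$ along $\Re(s)=\tfrac12-\varepsilon$, $T^*\le \Im(s)\le T$, and here you appeal to ``near-symmetry of zeros under the functional equation'' for the zeros of $F^{(k)}$. There is no such symmetry: the functional equation is for $F$, not for $F^{(k)}$, and no reflection $\rho\leftrightarrow 1-\bar\rho$ persists for the derivative's zeros. Riemann--von Mangoldt-type counts for $F^{(k)}$ would at best give $N(T+1)-N(T)=O(\log T)$, which yields $\Delta\arg H = O(\log T)$ on the right edge, not $O(1)$, so the zero count in $R_T$ is not bounded. In fact the only way I see to control $\arg H$ on that segment is to first prove $\Re H<0$ there, which confines $H$ to a half-plane and makes the argument variation bounded --- but that is exactly the paper's direct computation, rendering the contour wrapper superfluous. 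So either fill the gap by proving $\Re H<0$ on the right edge (and then drop the argument principle entirely), or supply a genuine mechanism, independent of any non-existent functional equation for $F^{(k)}$, that bounds the argument along that segment.
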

\noindent
This leads to the following result.
\begin{corollary}
The Riemann hypothesis for an $L$-function $F$ in the Selberg class, implies that there are only finitely many zeros to the left of the critical line and away from the real line of any derivatives $F^{(k)}$ of $F$.
\end{corollary}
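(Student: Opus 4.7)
The plan is to reduce the corollary to Theorem \ref{Main} by establishing the base case $k=0$ directly from the Riemann hypothesis and the standard description of the trivial zeros of $F$.

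First, I would locate all zeros of $F$ lying in the open half-plane $\Re(s)<1/2$ under RH. Since RH asserts that every non-trivial zero of $F$ lies on the line $\Re(s)=1/2$, any zero with $\Re(s)<1/2$ must be a trivial zero, i.e., must come from the poles of the Gamma factors in the functional equation \eqref{FEQ}. These are located at the points $s=-(n+\mu_j)/\lambda_j$ for integers $n\ge 0$ and indices $1\le j\le r$. In particular, the imaginary part of any such zero equals $-\Im(\mu_j)/\lambda_j$, and hence by the definition \eqref{T_F} of $T_F$ satisfies $|\Im(s)|\le T_F$. Consequently, under RH, $F$ itself has \emph{no} zero in the region $\Re(s)<1/2$, $|\Im(s)|>T_F$, which is a fortiori only finitely many.

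Next, I would apply Theorem \ref{Main} with $k=0$ and $T_{F,0}:=T_F$. Its hypothesis holds vacuously by the previous paragraph, so the theorem yields, for every positive integer $l$, a constant $T_{F,l}>0$ such that $F^{(l)}$ has only finitely many zeros with $\Re(s)<1/2$ and $|\Im(s)|>T_{F,l}$. Combined with the base case, this covers every derivative $F^{(k)}$ for $k\ge 0$ and gives the corollary.

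The main obstacle is clearly not in this deduction, which is essentially a two-line invocation of the preceding theorem; the real work is packaged inside Theorem \ref{Main}. The only small point worth double-checking in the reduction is that the trivial-zero set described by the poles of the Gamma factors in \eqref{FEQ} really accounts for \emph{all} zeros of $F$ in $\Re(s)<1/2$ under RH, with no extra contribution from poles of $F$ (there are none in $\Re(s)<1/2$ since the only possible pole sits at $s=1$) or from the archimedean factor $Q^s$ (which never vanishes). Once this is observed, the bound $|\Im(s)|\le T_F$ is immediate from the shape of the trivial zeros, and the corollary follows.
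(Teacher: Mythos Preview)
Your proposal is correct and follows exactly the route the paper intends: the corollary is stated immediately after Theorem \ref{Main} as a direct consequence, and your verification that under RH all zeros of $F$ with $\Re(s)<1/2$ are trivial (hence have $|\Im(s)|\le T_F$) supplies precisely the base case $k=0$ needed to invoke that theorem. The paper gives no further argument beyond this implication, so there is nothing to compare.
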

\noindent
Analogous results have been proven by Levinson and Montgomery in 1974 \cite[Theorem 7]{MR417074} for $\zeta(s)$, and by Y\i ld\i r\i m in 1996 \cite{MR1432879} for $L(s,\chi)$. 

\medskip

Our aim is to investigate the number of zeros and distribution of real parts of zeros of derivatives of $L$-functions $F(s)$ in $\mathcal{S}$, analogous to Berndt's work in \cite{MR0266874}, and Levinson and Montgomery's in \cite[Theorem 10]{MR417074}.
Conditional results under the Riemann hypothesis include Akatsuka's work \cite{MR2944752} on $\zeta'(s)$, and the third author's work \cite{MR3402773} on $\zeta^{(k)}(s)$ for all positive integers $k$, with improvements by Ge \cite{MR3658174} and his work with the third author \cite{MR4260166}. Analogous results for $L^{(k)}(s,\chi)$ were proven by Y\i ld\i r\i m \cite{MR1432879} and the third author \cite{MR3682476} with improvements for $k=1$ by Akatsuka and the third author \cite{MR3724167}, and by Ge \cite{MR3984262}. These are to be done in our subsequent paper.

\section{Notation}

\begin{enumerate}[label=(\roman*)]
\item Throughout this paper, we use the variable $s=\sigma+it$ to denote a complex number with real part $\sigma$ and imaginary part $t$. The real part of any given complex value $z$ is denoted by $\Re{(z)}$ or $\Re{z}$ and the imaginary part by $\Im{(z)}$ or $\Im{z}$. 
\item $F$ is always used to denote an element of the Selberg class $\mathcal{S}$.
\item For any function $f$, the function $\overline{f}$ is defined by $\overline{f}(s)=\overline{f(\overline{s})}$, where $\overline{z}$ denotes the complex conjugate of $z$.
\item For an entire function $f(z)=\sum_{n=0}^\infty a_n z^n$ of finite order (of growth) \cite[p. 138]{MR1976398}, we denote its order by $\rho(f)$ which can be computed as
\begin{equation} \label{DGE}
\rho(f)=\limsup_{n\to\infty}\frac{n\ln n}{-\ln|a_n|},
\end{equation}
\cite[p. 74]{MR0447572}.
\item The multinomial coefficients are given by
$$ \binom{n}{k_1,k_2,\cdots,k_l}=\dfrac{n!}{k_1!k_2!\cdots k_l!}. $$
\end{enumerate}

\section{Preliminaries}

In this section, we will review several important results which will enable us to derive our main theorem. Some of these results can be obtained from earlier works, while some require proofs.
\begin{lemma} \label{SS03}
Let $f$ be an entire function of order $\leq \rho$. If $z_1,z_2,\cdots$ denote the zeros of $f$, with $z_k \neq 0$, then for all $s>\rho$, we have
\begin{equation*}
    \sum\limits_{k=1}^{\infty}\dfrac{1}{|z_k|^{s}}<\infty.
\end{equation*}
\end{lemma}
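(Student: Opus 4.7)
The plan is to deduce the convergence of $\sum_k |z_k|^{-s}$ from the order-of-growth hypothesis via Jensen's formula, in the standard way.

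First I would reduce to the case $f(0)\neq 0$. If $f$ has a zero at the origin of multiplicity $m$, write $f(z)=z^m g(z)$; the function $g$ is entire of the same order as $f$ and has the same zero set as $f$ away from $0$, so it suffices to prove the claim for $g$. Assume henceforth $f(0)\neq 0$. Let $n(r)$ denote the number of zeros $z_k$ with $|z_k|\leq r$, and set $M(r)=\max_{|z|=r}|f(z)|$. By the definition of finite order, for any $\varepsilon>0$ there is an $r_0=r_0(\varepsilon)$ such that $M(r)\leq \exp(r^{\rho+\varepsilon})$ for all $r\geq r_0$.

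Next I apply Jensen's formula on the circle $|z|=2r$ to bound $n(r)$. The standard inequality
\begin{equation*}
n(r)\log 2 \;\leq\; \int_{r}^{2r}\frac{n(u)}{u}\,du \;\leq\; \int_{0}^{2r}\frac{n(u)}{u}\,du \;=\; \log\frac{M(2r)}{|f(0)|}
\end{equation*}
together with the order bound gives $n(r)=O_\varepsilon(r^{\rho+\varepsilon})$ as $r\to\infty$.

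Finally, I convert this density estimate into the desired series bound by Abel summation. Ordering the zeros so that $|z_1|\leq |z_2|\leq\cdots$ and using the Stieltjes representation
\begin{equation*}
\sum_{|z_k|\leq R}\frac{1}{|z_k|^{s}} \;=\; \int_{|z_1|^{-}}^{R}\frac{dn(r)}{r^{s}} \;=\; \frac{n(R)}{R^{s}} + s\int_{|z_1|}^{R}\frac{n(r)}{r^{s+1}}\,dr,
\end{equation*}
together with the bound $n(r)\ll r^{\rho+\varepsilon}$, one sees that the boundary term vanishes as $R\to\infty$ as soon as $s>\rho+\varepsilon$, and the integrand on the right is $O(r^{\rho+\varepsilon-s-1})$, which is integrable on $[|z_1|,\infty)$ under the same condition. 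Choosing $\varepsilon>0$ so small that $s>\rho+\varepsilon$ (possible because $s>\rho$), we obtain the convergence of $\sum_k|z_k|^{-s}$.

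There is no real obstacle here; the only point requiring a little care is the reduction to the case $f(0)\neq 0$ so that Jensen's formula applies in its cleanest form, and tracking the dependence on $\varepsilon$ so that the freedom in choosing it $>0$ is genuinely available under the strict inequality $s>\rho$.
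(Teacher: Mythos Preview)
Your argument is correct and is precisely the standard proof via Jensen's formula and partial summation. The paper does not give its own proof but simply cites Stein--Shakarchi, \emph{Complex Analysis}, Chapter~5, Theorem~2.1, whose proof is essentially the one you have written out; so there is nothing to compare.
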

\begin{proof}
See \cite[Chapter 5, Theorem 2.1]{MR1976398}.
\end{proof}
The next important result provides a uniform estimate for the size of $L$-function in the Selberg class.
\begin{lemma} \label{JSL}
Let $F \in \mathcal{S}$. For $t \geq 1$, uniformly in $\sigma$,
\begin{equation}
    \Delta_{F}(\sigma+it)=(\lambda Q^2t^{d_F})^{1/2-\sigma-it}\exp\left(itd_F+\dfrac{i\pi(\mu-d_F)}{4}\right)\left(\omega_F+O\left(\dfrac{1}{t}\right)\right),
\end{equation}
where
\begin{equation*}
    \Delta_{F}(s)=\omega_FQ^{1-2s}\prod\limits_{j=1}^{r}\dfrac{\Gamma(\lambda_{j}(1-s)+\overline{\mu_j})}{\Gamma(\lambda_js+\mu_j)},
\end{equation*}
$d_F=2\sum\limits_{j=1}^{r}\lambda_j$, $\mu=2\sum\limits_{j=1}^{r}(1-2\mu_j)$ and $\lambda=\prod\limits_{j=1}^{r}\lambda_{j}^{2\lambda_j}$.
\end{lemma}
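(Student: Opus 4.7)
The plan is to derive the asymptotic from Stirling's formula
\begin{equation*}
\log \Gamma(z) = \left(z - \tfrac{1}{2}\right)\log z - z + \tfrac{1}{2}\log(2\pi) + O\!\left(\tfrac{1}{|z|}\right),
\end{equation*}
valid in any sector $|\arg z|\leq \pi - \delta$, applied separately to each factor of the gamma ratio defining $\Delta_F(s)$. Setting $z_j := \lambda_j s + \mu_j$ and $w_j := \lambda_j(1 - s) + \overline{\mu_j}$, I would first observe that for $t\geq 1$ large enough (depending on $F$), $z_j$ lies near the positive imaginary axis and $w_j$ near the negative imaginary axis, so both are uniformly bounded away from the negative real axis over any bounded range of $\sigma$, and Stirling applies with error $O(1/|z_j|) = O(1/t)$.

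Next I would expand the logarithms by factoring out $\pm i\lambda_j t$:
\begin{equation*}
\log z_j = L_j + \tfrac{i\pi}{2} - \tfrac{ia_j}{\lambda_j t} + O(t^{-2}), \qquad \log w_j = L_j - \tfrac{i\pi}{2} + \tfrac{ib_j}{\lambda_j t} + O(t^{-2}),
\end{equation*}
where $L_j := \log(\lambda_j t)$, $a_j := \lambda_j\sigma + \mu_j$, and $b_j := \lambda_j(1-\sigma) + \overline{\mu_j}$. Retaining the $1/t$ correction is essential because multiplying by $(z_j - 1/2)$ of size $O(t)$ would otherwise inflate an $O(1/t)$ error to $O(1)$; the $O(t^{-2})$ remainder stays acceptable.

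I would then substitute into
\begin{equation*}
\log \Gamma(w_j) - \log \Gamma(z_j) = (w_j - \tfrac{1}{2})\log w_j - (z_j - \tfrac{1}{2})\log z_j - (w_j - z_j) + O(1/t),
\end{equation*}
and exploit the identities $w_j - z_j = \lambda_j(1 - 2s) + (\overline{\mu_j} - \mu_j)$, $w_j + z_j - 1 = \lambda_j + 2\Re\mu_j - 1$, and $(b_j - a_j) - (w_j - z_j) = 2i\lambda_j t$ to collapse the result into
\begin{equation*}
\log \Gamma(w_j) - \log \Gamma(z_j) = (w_j - z_j) L_j + 2i\lambda_j t - \tfrac{i\pi}{2}(w_j + z_j - 1) + O(1/t).
\end{equation*}
Summing over $j$ and invoking the definitions $d_F = 2\sum\lambda_j$, $\lambda = \prod \lambda_j^{2\lambda_j}$, and $\mu = 2\sum(1 - 2\mu_j)$, the main terms condense into $(\tfrac{1}{2} - s)\log(\lambda t^{d_F}) + itd_F + i\pi(\mu - d_F)/4$.

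Finally, multiplying by $\omega_FQ^{1-2s} = \omega_F (Q^2)^{1/2 - s}$ absorbs the $Q$-factor into $(\lambda Q^2 t^{d_F})^{1/2-\sigma-it}$, and exponentiating converts the additive Stirling error into a multiplicative factor $1 + O(1/t)$, giving the scalar $\omega_F + O(1/t)$ as claimed. I expect the main obstacle to be the meticulous algebraic bookkeeping that produces exactly the stated coefficients $\lambda$, $\mu$, and $d_F$ (and in particular that the phase is free of any residual $\log t$ dependence); by contrast, the uniformity in $\sigma$ over bounded intervals is automatic since Stirling's remainder depends only on $|z_j|\asymp t$, not on $\sigma$.
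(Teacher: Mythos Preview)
The paper does not give a proof of this lemma at all; it simply cites Steuding's book (Lemma~6.7 there). Your Stirling-based derivation is exactly the standard argument that reference contains, and you have carried out the bookkeeping correctly---in particular, retaining the $1/t$ corrections in $\log z_j$ and $\log w_j$ so that the $O(t)$ prefactors $(z_j-\tfrac12)$, $(w_j-\tfrac12)$ do not inflate the error, and noting that uniformity is only automatic over bounded ranges of~$\sigma$ (which is all the paper ever uses).
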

\begin{proof}
See \cite[Lemma 6.7]{MR2330696}.
\end{proof}

Next we prove five key lemmas. Firstly, we provide an estimate on the size of the entire function $(s-1)^mF(s)$.
\begin{lemma} \label{ELF}
Let $F \in \mathcal{S}$. Then we have
\begin{equation*}
(s-1)^mF(s) \ll \begin{cases}
                    1 & \text{ if}~  -1 \leq \Re{s} \leq 2, |\Im{s}| \leq 1,  \\
                    |s|^m & \text{ if}~~~~ \Re{s}\geq 2, \\
                    e^{C|s|\log|s|} & \text{ if}~ -1 \leq \Re{s} \leq 2, |\Im{s}| \geq 1; ~\text{or }~ \Re{s} \leq -1,
                 \end{cases}
\end{equation*}
for some $C>0$ depending on $F$.
\begin{proof}
Let $0<\alpha\le1$ be any fixed real number. It follows from the uniform convergence of the Dirichlet series expression \eqref{DSS} for $F(s)$, that $F(s) \ll_{\alpha}1$ whenever $\Re(s) \geq 1+ \alpha$. Therefore we have $(s-1)^mF(s) \ll |s|^m$ on the half plane $\Re{s \geq 2}$.
In the case when $s$ is lying in the bounded region $-\alpha \leq \Re{s} \leq 1+\alpha, |\Im{s}| \leq 1$ , the entire function $(s-1)^mF(s)$ is bounded. Therefore we have $(s-1)^mF(s) \ll_\alpha 1 $. For the case when $s$ is lying in the region $-\alpha \leq \Re{s} \leq 1+\alpha, |\Im{s}| \geq 1$, with the help of Lemma \ref{JSL} we get $F(s) \ll e^{(|s|+1)d_F\log |s|}$. This gives us the desired estimate for $(s-1)^mF(s)$ in the region $-1 \leq \Re{s} \leq 2, |\Im{s}| \geq 1$. In the remaining case when $\Re{s} \leq -1$, observe that whenever $\Re{s}\leq -\alpha$, we have $|\lambda_{j}(1-s)+\overline{\mu_j}|>0$ for $1\leq j \leq r$. Therefore we can use Stirling estimates for $\Gamma(\lambda_{j}(1-s)+\overline{\mu_j})$ and the functional equation to estimate
\begin{equation*}
    F(s)=w_FQ^{1-2s}\prod\limits_{j=1}^{r}\dfrac{\Gamma(\lambda_{j}(1-s)+\overline{\mu_j})}{\Gamma(\lambda_js+\mu_j)}\overline{F(1-\overline{s})}
\end{equation*}
and get $F(s)\ll e^{C|s|\log|s|}$ for some positive $C$ depending on $F$. This gives us the desired estimate for the function $(s-1)^mF(s)$.
\end{proof}
\end{lemma}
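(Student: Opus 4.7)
The plan is to split into the four regions of the case analysis and apply a different tool in each. For $\Re s \geq 2$, axiom 1 gives absolute convergence of \eqref{DSS}, hence $|F(s)| \leq \sum_{n\geq 1} |a_n| n^{-2} \ll 1$ and $(s-1)^m F(s) \ll |s|^m$. On the compact rectangle $-1 \leq \Re s \leq 2$, $|\Im s| \leq 1$, the function $(s-1)^m F(s)$ is entire by axiom 2 and therefore bounded by continuity on a compact set.

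For the left half-plane $\Re s \leq -1$, I would rewrite the functional equation \eqref{FEQ} as
\begin{equation*}
F(s) = \omega_F Q^{1-2s} \prod_{j=1}^{r} \frac{\Gamma(\lambda_j(1-s)+\overline{\mu_j})}{\Gamma(\lambda_j s + \mu_j)} \, \overline{F}(1-s).
\end{equation*}
Since $\Re(1-s) \geq 2$, the factor $\overline{F}(1-s)$ is $O(1)$ by the first case. Each numerator argument $\lambda_j(1-s)+\overline{\mu_j}$ has real part at least $2\lambda_j > 0$, so Stirling's formula gives $|\Gamma(\lambda_j(1-s)+\overline{\mu_j})| \ll e^{C|s|\log|s|}$. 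For the denominator, the reflection identity $1/\Gamma(z) = \sin(\pi z)\Gamma(1-z)/\pi$ combined with $|\sin(\pi z)| \leq e^{\pi|\Im z|}$ and Stirling applied to $\Gamma(1-z)$ yields $|1/\Gamma(\lambda_j s + \mu_j)| \ll e^{C|s|\log|s|}$. Multiplying the factors, $F(s) \ll e^{C|s|\log|s|}$, and $|s-1|^m$ is absorbed into the exponential.

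The main obstacle is the strip $-1 \leq \Re s \leq 2$, $|\Im s| \geq 1$, since neither absolute convergence nor the functional equation alone suffices (the latter maps the strip to itself). Here I would invoke Lemma \ref{JSL}, which gives $|\Delta_F(\sigma+it)| \ll |t|^{d_F(1/2-\sigma)}$ uniformly in $\sigma \in [-1,2]$ for $|t| \geq 1$, a polynomial bound in $|t|$. Together with the bounds already established on the two vertical boundaries ($\Re s = 2$ polynomial in $|s|$, and $\Re s = -1$ of type $e^{C|s|\log|s|}$) and the fact that $(s-1)^m F(s)$ is entire of finite order by axiom 2, the Phragm\'en--Lindel\"of principle (equivalently, the Hadamard three-lines theorem applied to $(s-1)^m F(s)\,e^{-C s \log s}$ for a suitable constant $C$) gives the required estimate $(s-1)^m F(s) \ll e^{C|s|\log|s|}$ throughout the strip, completing the four-case analysis.
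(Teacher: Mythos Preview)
Your argument is correct and mirrors the paper's four-region split almost exactly; the paper likewise appeals to Lemma~\ref{JSL} in the strip, though it does so more directly (via the functional equation $F(s)=\Delta_F(s)\,\overline F(1-s)$, which on $\Re s=-\alpha$ already gives a polynomial bound since $\Re(1-s)=1+\alpha$) rather than spelling out the Phragm\'en--Lindel\"of step as you do. Your route works as well, but observe that using the polynomial bound on the left edge coming from Lemma~\ref{JSL} instead of the cruder $e^{C|s|\log|s|}$ bound inherited from the $\Re s\le -1$ case makes the Phragm\'en--Lindel\"of application cleaner, since it removes the need for the auxiliary factor $e^{-Cs\log s}$ (whose branch and sign need some care to get the decay in the right direction).
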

In the next two sections, we complete the proofs of our main results.

\section{Zeros of $F^{(k)}(s)$}

In this section, we prove Theorem \ref{SPR1} and establish relevant information about the zeros of the derivatives of $F$ on the left half plane. Additionally, we also use these results to obtain a zero-free region on the right half plane and deduce that $F$ and all its derivatives are of order $1$.
\begin{proof}[Proof of Theorem \ref{SPR1}]
With the help of functional equation for $F(s)$, and reflection formula for $\Gamma(s)$, we write
\begin{align*}
    \overline{F}(1-s)&=\dfrac{\gamma(s)F(s)}{w_F\overline{\gamma}(1-s)} \\
    &=\dfrac{Q^{2s-1}\prod\limits_{j=1}^{r}\Gamma(\lambda_js+\mu_j)\prod\limits_{j=1}^{r}\Gamma(1-(\lambda_j(1-s)+\overline{\mu_j}))}{w_F\prod\limits_{j=1}^{r}\Gamma(\lambda_j(1-s)+\mu_j)\prod\limits_{j=1}^{r}\Gamma(1-(\lambda_j(1-s)+\overline{\mu_j}))}F(s) \\
    &=: F_0(s)F_1(s)F_2(s),
\end{align*}
where
\begin{equation*}
F_0(s)= F(s)\dfrac{Q^{2s-1}}{w_F\pi^{r}},
\end{equation*}
\begin{equation*}
F_1(s)=\prod\limits_{j=1}^{r}\Gamma(\lambda_js+\mu_j)\Gamma(1-(\lambda_j(1-s)+\overline{\mu_j})),
\end{equation*}
and
\begin{equation*}
F_2(s)=\prod\limits_{j=1}^{r}\sin(\pi(\lambda_j(1-s)+\overline{\mu_j})).
\end{equation*}
Differentiating $k$ times we obtain
\begin{align*}
    \overline{F}^{(k)}(1-s) := \dfrac{d^k}{ds^k}\overline{F}(1-s) &= \sum\limits_{\substack{k_0+k_1+k_2=k}}\binom{n}{k_0,k_1,k_2}F_0^{(k_0)}(s)F_1^{(k_1)}(s)F_2^{(k_2)}(s) \\
    &=F_1^{(k)}(s)F_0(s)F_2(s)+\sum\limits_{\substack{k_0+k_1+k_2=k \\ k_1 \leq k-1}}\binom{n}{k_0,k_1,k_2}F_0^{(k_0)}(s)F_1^{(k_1)}(s)F_2^{(k_2)}(s)
\end{align*}
By the triangle inequality we have
\begin{equation*}
    \left|\overline{F}^{(k)}(1-s)\right| \geq \left|F_1^{(k)}(s)F_0(s)F_2(s)\right| - \left|\sum\limits_{\substack{k_0+k_1+k_2=k \\ k_1 \leq k-1}}\binom{n}{k_0,k_1,k_2}F_0^{(k_0)}(s)F_1^{(k_1)}(s)F_2^{(k_2)}(s)\right|.
\end{equation*}
We now investigate when the right hand side of the above expression is positive. We first evaluate $F_1^{(k)}(s)$. Using the work of Spira \cite[p. 679]{MR181621}, it is not difficult to show
\begin{align*}
    F_1^{(k)}(s)&=\prod\limits_{j=1}^{r}\Gamma(\lambda_js+\mu_j)\Gamma(1-(\lambda_j(1-s)+\overline{\mu_j})) \\
    & \times \sum\limits_{n_1+\cdots+n_{2r}=k}\binom{k}{n_1,\cdots,n_{2r}}\lambda_{1}^{n_1+n_{r+1}}\cdots\lambda_{r}^{n_r+n_{2r}}\\
    &\times\left(\log^{n_1}(\lambda_1s+\mu_1)+\sum\limits_{n=0}^{n_1-1}E_{nn_1}(\lambda_1s+\mu_1)\log^{n}(\lambda_1s+\mu_1)\right) \\
    &\times \cdots \\
    &\times \Biggl(\log^{n_2r}(1-(\lambda_{r}(1-s)+\overline{\mu_r})\\
    &~~~ +\sum\limits_{n=0}^{n_{2r}-1}E_{nn_{2r}}(1-(\lambda_{r}(1-s)+\overline{\mu_r})\log^{n}(1-(\lambda_{r}(1-s)+\overline{\mu_r})\Biggr)
\end{align*}
where $E_{nn_i}$ are polynomials satisfying $E_{nn_{i}}=O(|s|^{-1})$. The major contribution in the growth of $F_1^{(k)}(s)F_0(s)F_2(s)$ comes from $G(s)\tau_k(\log{s})^k$, where
\begin{equation*}
G(s)=F_0(s)F_2(s)\prod\limits_{j=1}^{r}\Gamma(\lambda_js+\mu_j)\Gamma(1-(\lambda_j(1-s)+\overline{\mu_j}))
\end{equation*}
and 
\begin{equation*}
    \tau_k=\sum\limits_{n_1+\cdots+n_{2r}=k}\binom{k}{n_1,\cdots,n_{2r}}\lambda_{1}^{n_1+n_{r+1}}\cdots\lambda_{r}^{n_r+n_{2r}}.
\end{equation*}
Now taking out the factor $G(s)\tau_k(\log{s})^{k-1}$ from $F_1^{(k)}(s)$, we obtain the desired inequality
\begin{equation*}
    \left\vert \log s + \dfrac{1}{G(s)\tau_k\log^{k-1} s}\left(\sum\limits_{n=0}^{k-1}H_n(s)\log^n s\right) \right\vert > \left\vert\sum\limits_{\substack{k_0+k_1+k_2=k \\ k_1 \leq k-1}}\binom{n}{k_0,k_1,k_2}\dfrac{F_0^{(k_0)}(s)F_1^{(k_1)}(s)F_2^{(k_2)}(s)}{G(s)\tau_k\log^{k-1}}\right\vert,
\end{equation*}
where $H_n(s)$ are functions satisfying $H_n(s)=O(|s|^{-1})$. The above inequality is satisfied provided
\begin{equation} \label{I1}
    |\log s|>\left\vert \dfrac{1}{G(s)\tau_k\log^{k-1} s}\left(\sum\limits_{n=0}^{k-1}H_n(s)\log^n s\right) \right\vert + \left\vert\sum\limits_{\substack{k_0+k_1+k_2=k \\ k_1 \leq k-1}}\binom{n}{k_0,k_1,k_2}\dfrac{F_0^{(k_0)}(s)F_1^{(k_1)}(s)F_2^{(k_2)}(s)}{G(s)\tau_k\log^{k-1}}\right\vert.
\end{equation}
Now observe that for $|t|>T_F$, both the first and the second sums on the right hand side above are bounded if $\sigma$ is sufficiently large. Thus the inequality above is satisfied whenever $\sigma > \alpha_1+1$ for some large $\alpha_1>0$, which clearly depends on $F$ and $k$. This gives a zero-free region for $\overline{F}^{(k)}(1-s)$, which implies that $F^{(k)}(s) \neq 0$ for $\sigma < - \alpha_1$ and $|t|>T_F$.

Next we investigate zeros in $|t|\leq T_F$.
We first note that $G(s)\tau_k(\log{s})^k$ only have zeros coming from the sine factors $F_2(s)$.
Further, zeros of $F_2(s)$ each lies in the rectangle $R_{n,j}$ with vertices $r_1,r_2,r_3,r_4$ given by
$$r_1=1+\frac{2\overline{\mu_j}+2n - c_k}{2\lambda_j}+t_{F,k}i,$$
$$r_2=1+\frac{2\overline{\mu_j}+2n - c_k}{2\lambda_j}-t_{F,k}i,$$
$$r_3=1+\frac{2\overline{\mu_j}+2n + c_k}{2\lambda_j}-t_{F,k}i,$$
$$r_4=1+\frac{2\overline{\mu_j}+2n + c_k}{2\lambda_j}+t_{F,k}i,$$
for $1\leq j \leq r$ and every positive integer $n$
where $t_{F,k}>0$ and $c_k>0$ are chosen small enough so that no rectangles intersect each other. This is always possible unless
\begin{equation} \label{CM}
\frac{\overline{\mu_j}}{\lambda_j}=\frac{\overline{\mu_l}}{\lambda_l}
\end{equation}
for some $j\neq l$, in which case, we count the zeros with multiplicity. Observe also that on the boundaries of all the rectangles $R_{n,j}$, the inequality \eqref{I1} holds provided $\sigma$ is large enough, say $\sigma > \alpha_2+1$, where $\alpha_2$ also depends on $F$ and $k$. Now by Rouche's theorem and inequality \eqref{I1}, we see that the number of zeros of $\overline{F}^{(k)}(1-s)$ is equal to that of $G(s)\tau_k(\log{s})^k$, provided $\sigma > \alpha_2+1$. Moreover these zeros of $\overline{F}^{(k)}(1-s)$ are located in the rectangles $R_{n,j}$.
Finally we set $\alpha_{F,k} := \max\{\alpha_1,\alpha_2\}$ and take the smallest positive integer $N_{F,k}$ satisfying
$$
\min_{1\leq j \leq r} \left(1+\frac{2\overline{\mu_j}+2N_{F,k}-c_k}{2\lambda_j} \right) > \alpha_{F,k},
$$
which both depend on $F$ and $k$.
Therefore in the region $|t|\le T_F$, we have for each $1\le j\le r$ and $n\ge N_{F,k}$, zeros $z_{n,j}$ of $F^{(k)}(s)$ satisfy
\begin{equation*}
     z_{n,j} = -\dfrac{n}{\lambda_j}+O(1).
\end{equation*}
\end{proof}

The next lemma provides a zero-free region on the right half plane for the derivatives of $F$.
\begin{lemma} \label{ZFR}
There exists a constant $\mathcal{A}_{F,k}>0$ such that $F^{(k)}(s)$ has no zeros in the region $|\sigma|>\mathcal{A}_{F,k}$ and $|t|>T_F$, where $T_F$ is as determined in \eqref{T_F}.
\end{lemma}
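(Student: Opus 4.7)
The plan is to split $|\sigma|>\mathcal{A}_{F,k}$ into its left half ($\sigma<-\mathcal{A}_{F,k}$) and right half ($\sigma>\mathcal{A}_{F,k}$). The left half is already delivered by Theorem \ref{SPR1}: simply take $\mathcal{A}_{F,k}\ge\alpha_{F,k}$. So the real content is the right-hand zero-free region, and here the $|t|>T_F$ restriction in the statement is vacuous once we succeed --- I plan to show $F^{(k)}(s)\neq 0$ for all $t$ provided $\sigma$ is large enough.

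For $\sigma>1$ the Dirichlet series \eqref{DSS} converges absolutely, so term-by-term differentiation gives
$$F^{(k)}(s)=(-1)^k\sum_{n=1}^{\infty}\frac{a_n(\log n)^k}{n^s}.$$
I would let $n_0=n_0(F,k)$ denote the smallest index for which $a_n(\log n)^k\neq 0$: for $k=0$, axiom (5) forces $a_1=1$ (since $\log F(s)\to 0$ as $\sigma\to\infty$), so $n_0=1$; for $k\ge 1$, the $n=1$ term is killed by $\log 1 = 0$, and we take $n_0\ge 2$ to be the smallest index with $a_n\neq 0$, which exists as long as $F\not\equiv 1$. Factoring out the leading term reduces non-vanishing of $F^{(k)}(s)$ to showing that the remainder
$$R(s):=\sum_{n>n_0}\frac{a_n(\log n)^k}{a_{n_0}(\log n_0)^k}\Bigl(\frac{n_0}{n}\Bigr)^s$$
satisfies $|R(s)|<1$ for $\sigma$ large.

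Bounding $R(s)$ is a routine tail estimate. Using axiom (4), $|a_n|\ll_\epsilon n^\epsilon$, and pulling out the dominating geometric factor $(n_0/(n_0+1))^{\sigma-\sigma_0}$ for some fixed auxiliary $\sigma_0>1+\epsilon$, one obtains $|R(s)|\ll_{F,k}(n_0/(n_0+1))^{\sigma-\sigma_0}$, which decays to $0$ as $\sigma\to\infty$. Choosing $A_{F,k}$ so that this upper bound is below $1/2$ on $\sigma>A_{F,k}$ and then setting $\mathcal{A}_{F,k}=\max(\alpha_{F,k},A_{F,k})$ finishes the proof. I do not foresee a genuine obstacle: the whole argument rests on the observation that $F^{(k)}(s)$ is asymptotic to its leading Dirichlet term as $\sigma\to+\infty$, and the only mildly delicate point is correctly identifying that leading term, which is exactly what the definition of $n_0$ does.
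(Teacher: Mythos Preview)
Your proposal is correct and follows the same split as the paper: the left half $\sigma<-\mathcal{A}_{F,k}$ comes from Theorem~\ref{SPR1}, and the right half is handled separately. The only difference is that the paper dispatches the right-half zero-free region by citing an external result (\cite[Proposition~1]{MR4230681}), whereas you supply the standard leading-Dirichlet-term argument directly; your self-contained version is almost certainly what that cited proposition contains, and your observation that the $|t|>T_F$ restriction is actually unnecessary on the right is a small bonus.
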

\begin{proof}
This follows from \cite[Proposition 1]{MR4230681} and Theorem \ref{SPR1}.
\end{proof}

The following lemma establishes the order of the entire function $(s-1)^mF(s)$ which is crucial in our discussion. The proof is motivated by the observations reported in \cite{NTSLI}.
\begin{lemma}\label{WDO}
The entire function $(s-1)^mF(s)$ is of order 1.
\end{lemma}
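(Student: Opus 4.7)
The plan is to prove $\rho((s-1)^m F) = 1$ by establishing matching upper and lower bounds on the order.

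For the upper bound $\rho \leq 1$, I would invoke Lemma \ref{ELF} directly. Its estimate $|(s-1)^m F(s)| \ll e^{C|s|\log|s|}$ holds throughout the unbounded regions $\{-1 \leq \Re s \leq 2,\ |\Im s|\geq 1\}$ and $\{\Re s \leq -1\}$, while elsewhere the growth is at most polynomial. Hence $M(r) := \max_{|s|=r}|(s-1)^mF(s)|$ satisfies $M(r) \ll e^{Cr\log r}$ for all $r$ sufficiently large, and since $Cr\log r < r^{1+\varepsilon}$ eventually for any $\varepsilon > 0$, we conclude $\rho \leq 1$.

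For the lower bound $\rho \geq 1$, I would exploit the abundance of trivial zeros of $F$ forced by the gamma factor. Since $(s-1)^m \Phi(s) = (s-1)^m \gamma(s) F(s)$ is entire by axiom 2, and the gamma factor $\gamma(s) = Q^s\prod_{j=1}^{r}\Gamma(\lambda_j s+\mu_j)$ has poles at the points $s_{k,j} := -(k+\mu_j)/\lambda_j$ for $k\geq 0$ and $1 \leq j \leq r$, and since $\Re s_{k,j} \leq 0$ means these points are distinct from the possible pole of $F$ at $s=1$, the entire function $(s-1)^m F(s)$ must vanish at every such $s_{k,j}$. Listing these zeros (excluding any which happens to coincide with $0$) as $\{z_k\}$, we have
\begin{equation*}
\sum_{k} \frac{1}{|z_k|} \;\geq\; \sum_{j=1}^{r}\sum_{n \geq 1}\frac{\lambda_j}{n+|\mu_j|} \;=\; \infty.
\end{equation*}
If $\rho$ were strictly less than $1$, Lemma \ref{SS03} applied with exponent $1 > \rho$ would force this sum to be finite, a contradiction. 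Hence $\rho \geq 1$, and combining with the upper bound gives $\rho = 1$.

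The only structural point requiring care is verifying that each $s_{k,j}$ is genuinely a zero of $(s-1)^m F(s)$, which amounts to checking that $F$ is holomorphic at $s_{k,j}$; this is immediate since $F$ is meromorphic with its only possible pole at $s=1$, while each $s_{k,j}$ lies in the closed left half plane. Coincidences among the $s_{k,j}$ only increase multiplicity, which is harmless for our argument since we only need the series $\sum 1/|z_k|$ to diverge when each distinct location is counted once. The proof thus reduces to a clean combination of Lemma \ref{ELF} (upper bound) with Lemma \ref{SS03} applied to the explicit trivial zeros coming from the gamma factor.
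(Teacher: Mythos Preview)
Your proposal is correct and follows essentially the same approach as the paper's own proof: both obtain the upper bound $\rho\le 1$ directly from the growth estimate in Lemma~\ref{ELF}, and the lower bound $\rho\ge 1$ by observing that the poles of the gamma factors force trivial zeros $s_{k,j}=-(k+\mu_j)/\lambda_j$ of $(s-1)^mF(s)$ whose reciprocal-modulus sum diverges, contradicting Lemma~\ref{SS03} if $\rho<1$. Your write-up is slightly more explicit about why $F$ is holomorphic at each $s_{k,j}$ and about handling possible coincidences, but the argument is the same.
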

\begin{proof}
We first show that the  order of $(s-1)^mF(s)$ is greater than or equal to 1. For this, we recall that for each $j$, $F(s)$ has equally spaced trivial zeros at $s=-(\mu_j+l)/\lambda_j$ where $l \in \mathbb{Z}_{\geq 0}$, which are nothing but the poles of the gamma factors appearing in \eqref{FEQ}. Now, if the order of $(s-1)^mF(s)$ is strictly less than 1, then by Theorem \ref{SS03}, we would have
\begin{equation*}
    \sum\limits_{\substack{s \neq 0 \\ F(s)=0}}\dfrac{1}{|s|^{1-\epsilon}} < \infty
\end{equation*}
which is a contradiction.
Therefore, it suffices to show that $(s-1)^mF(s)$ is of order $\leq 1$. This follows from Lemma \ref{ELF}.
\end{proof}
As a consequence of the above lemma, we can compute the order of the $k$-th derivative as stated below. 
\begin{lemma} \label{DFO}
For any non-negative integer $k$, $(s-1)^{k+m}F^{(k)}(s)$ is an entire function of order 1.
\end{lemma}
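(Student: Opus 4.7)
My plan is to establish the two order inequalities $\rho \leq 1$ and $\rho \geq 1$ separately, leaning on Lemma \ref{WDO} for the upper bound and Theorem \ref{SPR1} for the lower bound.

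First, I would verify that $(s-1)^{k+m} F^{(k)}(s)$ is entire. Writing $F(s) = g(s)/(s-1)^m$ with $g(s) := (s-1)^m F(s)$ entire (by Axiom 2), the Leibniz rule yields
\[
(s-1)^{k+m} F^{(k)}(s) = \sum_{j=0}^{k} \binom{k}{j} c_{j,k}\, g^{(j)}(s)\, (s-1)^{j},
\]
where the constants $c_{j,k}$ arise from differentiating $(s-1)^{-m}$ repeatedly. The right-hand side is manifestly entire.

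For the upper bound, I would invoke the standard fact that the derivative of an entire function of order $\rho$ is again entire of order at most $\rho$; this follows from Cauchy's integral formula applied on a unit circle together with Lemma \ref{ELF}. Since $g$ has order exactly $1$ by Lemma \ref{WDO}, each $g^{(j)}$ has order at most $1$, and multiplying by the polynomial $(s-1)^j$ preserves this. A finite sum of entire functions of order at most $1$ has order at most $1$, so the expression above has order $\leq 1$.

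For the lower bound, I would appeal to Theorem \ref{SPR1}, which supplies zeros $z^{(k)}_{n,j} = -n/\lambda_j + O_k(1)$ of $F^{(k)}$ for all $n \geq N_{F,k}$; fixing any $1 \leq j \leq r$ already gives infinitely many. These are also zeros of the entire function $(s-1)^{k+m} F^{(k)}(s)$, since for $n$ sufficiently large none of them coincide with $s=1$. Because $|z^{(k)}_{n,j}|$ is comparable to $n$, the series $\sum_{n \geq N_{F,k}} |z^{(k)}_{n,j}|^{-(1-\varepsilon)}$ diverges for every $\varepsilon > 0$, so by the contrapositive of Lemma \ref{SS03} the order of $(s-1)^{k+m} F^{(k)}(s)$ must be at least $1$. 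I do not foresee a substantive obstacle: both bounds reduce to routine facts once Lemma \ref{WDO} and Theorem \ref{SPR1} are in hand, the only mild care being to confirm that the trivial-zero family has moduli growing linearly enough for the sum to diverge, which is immediate from the displayed asymptotic.
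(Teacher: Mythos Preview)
Your argument is correct. The overall architecture---splitting into $\rho\le 1$ via Lemma \ref{WDO} and $\rho\ge 1$ via the trivial zeros from Theorem \ref{SPR1} together with Lemma \ref{SS03}---matches the paper, and your lower bound is essentially verbatim the paper's. The upper bound, however, is obtained differently: the paper expands $F$ in its Laurent series at $s=1$, writes down the Taylor coefficients of $(s-1)^{k+m}F^{(k)}$ explicitly, and feeds them into the coefficient formula \eqref{DGE} for the order, reducing directly to the corresponding $\limsup$ for $(s-1)^mF$. Your Leibniz decomposition $(s-1)^{k+m}F^{(k)}=\sum_j c_{j,k}\binom{k}{j}(s-1)^j g^{(j)}$ together with the Cauchy-estimate fact that differentiation, polynomial multiplication, and finite sums do not raise the order is cleaner and avoids the coefficient bookkeeping; the paper's route is more hands-on but self-contained in that it does not invoke the (unstated in the paper) lemma that $\rho(g')\le\rho(g)$.
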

\begin{proof}
The case $k=0$ follows from Lemma \ref{WDO}. For $k \geq 1$, consider the Laurent series expansion of $F(s)$ at $s=1$ written as
\begin{equation*}
    F(s)=\sum\limits_{n=-m}^{\infty}a_n(s-1)^n.
\end{equation*}
Differentiating $k$ times we get
\begin{equation*}
    F^{(k)}(s)=\sum\limits_{n=-m}^{\infty}n(n-1)\cdots (n-k+1) a_n(s-1)^{n-k}.
\end{equation*}
This gives
\begin{align*}
    (s-1)^{k+m}F^{(k)}(s)&=\sum\limits_{n=-m}^{\infty}n(n-1)\cdots (n-k+1) a_n(s-1)^{n+m}\\
    &=\sum\limits_{N=0}^{\infty}(N-m)(N-m-1)\cdots(N-m-k+1)a_{N-m}(s-1)^N.
\end{align*}
Now the order of entire function $(s-1)^{k+m}F^{(k)}(s)$ is equal to
\begin{align*}
&\limsup_{N\to\infty}\frac{N\log N}{-\log|(N-m)(N-m-1)\cdots(N-m-k+1)a_{N-m}|} \\
=&\limsup_{N\to\infty}\dfrac{N\log N}{-\log|a_{N-m}|-\log|N^{k}(1-m/N)(1-(m+1)/N)\cdots(1-(m+k-1)/N|} \\
=&\limsup_{N\to\infty}\dfrac{N\log N}{-\log|a_{N-m}|\left(1+\log|\dfrac{N^{k}(1-m/N)(1-(m+1)/N)\cdots(1-(m+k-1)/N}{a_{N-m}}|\right)} \\
=&\limsup_{N\to\infty}\dfrac{N\log N}{-\log|a_{N-m}|\left(1+\log|\dfrac{N^{k}(1-m/N)(1-(m+1)/N)\cdots(1-(m+k-1)/N}{a_{N-m}}|\right)} \\
\leq & \limsup_{N\to\infty}\dfrac{N\log N}{-\log|a_{N-m}|} \leq 1,
\end{align*}
where the last inequality follows from Lemma \ref{WDO}. Now from Theorem \ref{SPR1}, we know that 
\begin{equation*}
    \sum\limits_{\substack{z \\ F^{(k)}(z)=0}}\dfrac{1}{|z|^{1-\epsilon}} 
\end{equation*}
diverges for any $\epsilon > 0$. Therefore, by Theorem \ref{SS03} we conclude that the order of $(s-1)^{k+m}F^{(k)}(s)$ is equal to 1.
\end{proof}

\section{Proof of Theorem \ref{Main}}

By Lemma \ref{DFO}, we know that $(s-1)^{k+1}F^{(k)}(s)$ is of finite order. Let $A=\{A_\ell\}_{\ell\geq 1}$ and $m_0$ denote the set of zeros, and the order of zero at $s=0$ of the function $(s-1)^{k+1}F^{(k)}(s)$, respectively. Using Hadamard factorization theorem, we write
\begin{equation*}
    (s-1)^{k+m}F^{(k)}(s)=s^{m_0}e^{g(s)}\prod\limits_{\ell=1}^{\infty}E_{1}\left(\dfrac{s}{A_\ell}\right),
\end{equation*}
where $g$ is a polynomial of ${\rm deg}(g)\leq 1$, and $E_{1}(s)=(1-s)e^s$. Taking logarithmic derivative, we obtain 
\begin{equation} \label{logderi1}
    \dfrac{F^{(k+1)}}{F^{(k)}}(s)=\dfrac{m_0}{s}-\dfrac{k+m}{s-1}+\sum\limits_{\ell=1}^{\infty}\left(\dfrac{1}{s-A_\ell}+\dfrac{1}{A_\ell}\right)+O(1).
\end{equation}
Utilizing Theorem \ref{SPR1}, we write the set $\{A_\ell\}_\ell$ as a disjoint union of $B_1(:=B_{1,0}\cup B_{1,1}),B_2$ and $B_3$, where
\begin{equation*}
    B_{1,0}=\{ z_{n,j} \mid n \geq N_{F,k}, 1 \leq j \leq r\},
\end{equation*}
\begin{equation*}
    B_{1,1}=  \{A_\ell \mid \Re(A_\ell)>-\alpha_{F,k}, |\Im(A_\ell)|\leq T_F \},
\end{equation*}
\begin{equation*}
    B_2=\{A_\ell|A_\ell \notin B_1, \Re{(A_\ell)} \geq 1/2\}
\end{equation*}
and
\begin{equation*}
    B_3=\{A_\ell|A_\ell \notin B_1, \Re{(A_\ell)} < 1/2\}.
\end{equation*}
Here $T_F$ is as defined in \eqref{T_F}.

Using this partition of the set $\{A_\ell\}_\ell$, we write the sum on the right hand side of \eqref{logderi1} as
\begin{equation}
    \sum\limits_{\ell=1}^{\infty}\left(\dfrac{1}{s-A_\ell}+\dfrac{1}{A_\ell}\right)=\sum\limits_{i=1}^{3}\sum\limits_{a \in B_i}\left(\dfrac{1}{s-a}+\dfrac{1}{a}\right).
\end{equation}
From \cite[Proposition 1]{MR4230681}, it is easy to see that the set $B_{1,1}$ is finite.
Our assumption implies that the sum over $B_3$ is finite. Moreover, applying Lemma \ref{ZFR}, we find that
\begin{equation*}
    \max\limits_{a\in B_2 \cup B_3} |\Re(a)|<\mathcal{A}_{F,k}.
\end{equation*}
Hence,
\begin{align*}
    \sum\limits_{\ell=1}^{\infty}\Re{\left(\dfrac{1}{s-A_\ell}+\dfrac{1}{A_\ell}\right)}&=\sum\limits_{a \in B_{1,0}}\Re{\left(\dfrac{1}{s-a}+\dfrac{1}{a}\right)}+ \sum\limits_{a \in B_2}\Re{\left(\dfrac{1}{s-a}+\dfrac{1}{a}\right)}+ O(1)\\
    &=\sum\limits_{a \in B_{1,0}}\left(\dfrac{\sigma-\Re{a}}{|s-a|^2}+\dfrac{\Re{a}}{|a|^2}\right)+\sum\limits_{a \in B_2}\left(\dfrac{\sigma-\Re{a}}{|s-a|^2}+ \dfrac{\Re{a}}{|a|^2}\right) + O(1),
\end{align*}
where the implied constants above depend on $F$ and $k$ since the number of zeros in $B_{1,1}$ depends on $\alpha_{F,k}$ and $\mathcal{A}_{F,k}$, and the assumption is for any fixed $F$ and $k$ which results in finiteness of zeros in $B_3$.

We estimate the sum over zeros in $B_2$ as  
\begin{equation*}
    \sum\limits_{a \in B_2}\left(\dfrac{\sigma-\Re{a}}{|s-a|^2}+ \dfrac{\Re{a}}{|a|^2}\right) < \sum\limits_{a \in B_2}\dfrac{\frac{1}{2}-\Re{a}}{|s-a|^2}+\mathcal{A}_{F,k}\sum\limits_{a \in B_2}\dfrac{1}{|a|^2}
    \le \mathcal{A}_{F,k}\sum\limits_{a \in B_2}\dfrac{1}{|a|^2} = O(1).
\end{equation*}
The sum over $B_{1,0}$ can be expanded as follows
\begin{align*}
  \sum\limits_{a \in B_{1,0}}\left(\dfrac{\sigma-\Re{a}}{|s-a|^2}+ \dfrac{\Re{a}}{|a|^2}\right)&= \sum\limits_{a \in B_{1,0}} \left(\dfrac{\sigma}{|s-a|^2}+\dfrac{|s|^2\Re{a}}{|a|^2|s-a|^2} -\dfrac{2\sigma (\Re{a})^2}{|a|^2|s-a|^2} -\dfrac{2t\Re{a}\Im{a}}{|a|^2|s-a|^2} \right) \\
  &=\sum\limits_{a \in B_{1,0}}\left(\dfrac{\sigma}{|s-a|^2}\left[1-\dfrac{2(\Re{a})^2}{|a|^2}\right] + \dfrac{|s|^2\Re{a}}{|a|^2|s-a|^2}-\dfrac{2t\Re{a}\Im{a}}{|a|^2|s-a|^2}\right).
\end{align*}
When $-\mathcal{A}_{F,k}<\sigma<1/2$ and $t$ is sufficiently large, both the sums
\begin{equation*}
    \sum\limits_{a \in B_{1,0}}\dfrac{\sigma}{|s-a|^2}\left[1-\dfrac{2(\Re{a})^2}{|a|^2}\right]
\end{equation*}
and 
\begin{equation*}
    \sum\limits_{a \in B_{1,0}} \dfrac{2t\Re{a}\Im{a}}{|a|^2|s-a|^2}
\end{equation*}
are bounded. For the remaining sum, since $\Re{a}<0$ for all $a \in B_{1,0}$, it suffices to consider the region $|a|<|s|/2$. The sum can then be estimated as
\begin{align*}
    \sum\limits_{a \in B_{1,0}}\dfrac{|s|^2\Re{a}}{|a|^2|s-a|^2} &\leq \sum\limits_{j=1}^{r}\sum\limits_{|a_{n,j}|<|s|/2}\dfrac{|s|^2\Re{a_{n,j}}}{|a_{n,j}|^2|s-a_{n,j}|^2} \\
    &\leq \dfrac{4}{9}\sum\limits_{j=1}^{r}\sum\limits_{n}\dfrac{1}{\Re{a_{n,j}}}+O(1)\\
    &\leq -K\log|s|+O(1)
\end{align*}
for some $K>0$. Thus $F^{(k+1)}(s)\neq 0$ for sufficiently large $t$, which was to be shown. The result now follows using induction on $k$.
\section*{Acknowledgements}
Research of the first author was supported by the Science and Engineering Research Board, Department of Science and Technology, Government of India under grant SB/S2/RJN-053/2018. The second author acknowledges the support of National Board for Higher Mathematics (NBHM), Department of Atomic Energy (DAE), Government of India (DAE Ref no: 0204/37/2021/R\&D-II/15564). Research of the third author was supported by JSPS KAKENHI Grant Number 18K13400 and MEXT Initiative for Realizing Diversity in the Research Environment.

\bibliographystyle{alpha}
\bibliography{mybibfile}
\end{document}